\newcommand{\mc}{\mathfrak c}
\newcommand{\mg}{\mathfrak g}
\newcommand{\mh}{\mathfrak h}
\newcommand{\mn}{\mathfrak n}
\newcommand{\mz}{\mathfrak z}
\newcommand{\mso}{\mathfrak{so}}
\newcommand{\msp}{\mathfrak{sp}}
\newcommand{\msl}{\mathfrak{sl}}
\newcommand{\HH}{\mathbb H}
\newcommand{\RR}{\mathbb R}
\newcommand{\CC}{\mathbb C}
\newcommand{\OO}{\mathbb O}
\theoremstyle{plain}
\newtheorem{Theorem}{Theorem}
\newtheorem{prop}{Proposition}
\newtheorem{cor}{Corollary}
\begin{document}

\title[Parabolic nilradicals of Heisenberg type]{Parabolic nilradicals of Heisenberg type}
\author{Aroldo Kaplan}
\author{Mauro Subils}
\date{\today}
\address{A. Kaplan: CIEM-CONICET, Famaf, U.N.C., Cordoba 5000, Argentina, and Department of Mathematics, U. of Massachusetts, Amherst, MA 01002, USA}
\email{kaplan@math.umass.edu}
\address{M. Subils: CONICET-FCEIA, U.N.R., Pellegrini 250, 2000 Rosario, Argentina.} \email{msubils@gmail.com}

\thanks{This work was supported by Secyt-UNR, Scyt-UNC and CONICET
}

\maketitle


\maketitle

 \begin{abstract}
We show that every non-compact simple real Lie algebra not isomorphic to $\mso(n,1)$ has a unique conjugacy class of parabolic subalgebras whose nilradical is of Heisenberg type, or non-singular, and give some applications.
\end{abstract}

\section{Introduction}

We show that every non-compact simple real Lie algebra not isomorphic to $\mso(n,1)$ has a unique conjugacy class of parabolic subalgebras whose nilradical is of Heisenberg type, or non-singular. The nilradicals that appear are all  of the form
 $\mathbb F^{2n}\oplus \mathbb F$ or
  $\mathbb F^{n}\oplus \Im(\mathbb F)$
where $\mathbb F = \RR, \CC, \HH, \OO$. The algebras $\mso(n,1)$ have a unique conjugacy class of parabolic subalgebras, all with abelian nilradicals, and are the unique simple algebras with these properties.

There is an extensive literature dealing with special pairs $(G,P)$ with $G$ a simple Lie group and $P\subset G$ a parabolic subgroup with abelian or Heisenberg-like nilradical \cite{CSl}. In the abelian case the spaces $G/P$ are called Almost Hermitian Symmetric. Examples with Heisenberg type nilradical are the parabolics associated to contact structures over the division algebras. Heisenberg type groups are square-integrable in the sense of Wolf \cite{W}, but the converse is not true. The existence and uniquennes properties seem characteristic of the type dicussed here.

Consequences for the automorphism group and the Tanaka prolongation of general algebras of Heisenberg type are deduced.
Details of the proofs and further consequences will be discussed in a forthcoming paper.

P. Deligne pointed out an error in our original statement. J. Huerta noted that the analogous of Theorem 1 for complex goups (which follows from it) was known in complex contact geometry.  E. Hullet directed the doctoral disertation of one of us (Subils), which contained the results on Tanaka prolongation. J. Wolf led us to his square-integrable nilradicals, which include properly those considered here. We thank all of them, as well as J. Baez, J. Humphreys and F. Ricci, for their help and advice.

\section{Main Theorem}


The real division algebras $\mathbb F =$ $\mathbb R,\CC,\HH,\OO$, give rise to two kinds of 2-step real nilpotent Lie algebras:
\begin{align}\label{def:hn}
    \mathfrak h_n(\mathbb F)=\mathbb F^{2n}\oplus \mathbb F \\
     [(a,b),(c,d)]=a^{t}d-c^{t}b\nonumber,
\end{align}
for $a,\,b,\,c,\,d\in \mathbb F^{n}$ and for any $n\geq 1$ if $\mathbb F=\mathbb R, \mathbb C, \mathbb H$, and for $n=1$ if $\mathbb F=\mathbb O$.
\begin{align}\label{def:hn'}
    \mathfrak h'_{p,q}(\mathbb F)=\mathbb F^{p+q}\oplus \Im(\mathbb F) \\
     [(a,b),(c,d)]=a^{t}\overline{c}-c^{t}\overline{a}+\overline{b}^{t}d-\overline{d}^{t}b\nonumber,
\end{align}
for $a,\,c\in \mathbb F^{p}$, $b,\,d\in \mathbb F^{q}$ and for any $p,q\geq 1$ if $\mathbb F= \mathbb C, \mathbb H$, and for $p=1$ and $q=0$ if
$\mathbb F=\mathbb O$.

\begin{Theorem}\label{theorem:niltipoH}
Every real simple non-compact Lie algebra not isomorphic to $\mathfrak {so}(n,1)$ has a unique conjugacy class of parabolic subalgebras whose nilradical is isomorphic to one of the $\mathfrak h_n(\mathbb F)$, $\mathfrak h_{p,q}'(\mathbb F)$ with $\mathbb F=\mathbb C,\mathbb H,\mathbb O$.  On the other hand, $\mso(n,1)$ has a unique conjugacy class of parabolic subalgebras, all with abelian nilradicals, and it is the unique simple algebra with these properties.
\end{Theorem}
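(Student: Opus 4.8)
The plan is to reduce everything to the combinatorics of the restricted root system. Fix a maximal split abelian subalgebra $\ma\subset\mg$ with restricted roots $\Sigma$, a positive system, and simple roots $\Pi=\{\alpha_1,\dots,\alpha_r\}$, $r$ being the real rank. Every parabolic is conjugate to a standard one $\mpp_S$ attached to a subset $S\subseteq\Pi$ (the simple roots absent from the Levi), and the nilradical $\mn_S=\bigoplus_{k\ge1}\mg_k$ is graded by the $S$-height $\mathrm{ht}_S$. I would first record that $\mn_S$ is at most $2$-step if and only if $\mathrm{ht}_S(\lambda_0)\le 2$, where $\lambda_0$ is the highest root; writing $c_\alpha$ for the coefficient of $\alpha$ in $\lambda_0$, the genuinely $2$-step cases are exactly $S=\{\alpha\}$ with $c_\alpha=2$ and $S=\{\alpha,\beta\}$ with $c_\alpha=c_\beta=1$, while $S=\{\alpha\}$ with $c_\alpha=1$ produces an abelian nilradical and anything larger exceeds height $2$.

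The heart of the argument is to show that, among these candidates, non-singularity holds for exactly one conjugacy class, namely the \emph{contact grading} in which the center $\mg_2=\mg_{\lambda_0}$ is a single highest-root space. Concretely, for any other candidate $S$ the space $\mg_2$ is a sum of several top root spaces, and I would exhibit a root vector $x\in\mg_1$ for which $\mathrm{ad}_x\colon\mg_1\to\mg_2$ fails to be onto, so that $\mn_S$ is singular. \emph{This is the combinatorial crux and the main obstacle}, to be carried out over the classification of $\Sigma$: in type $A$ it isolates the removal of the two end nodes, and in the non-simply-laced and exceptional cases the unique node with $c_\alpha=2$ that is adjacent to $-\lambda_0$ (one must further check that the other coefficient-$2$ nodes, e.g. in $C_r$, are eliminated). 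Conversely, for the contact grading the fact that $\lambda_0$ is the highest root makes the bracket $\mg_1\times\mg_1\to\mg_{\lambda_0}$ non-singular, so $\mn$ is non-singular.

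Existence and the identification of the isomorphism type then follow from the restricted-root multiplicities $m_\lambda=\dim\mg_\lambda$. In the reduced case the center is the single space $\mg_{\lambda_0}$, of dimension $m_{\lambda_0}\in\{1,2,4,8\}=\dim\mathbb F$, and one obtains the family $\mathfrak h_n(\mathbb F)$; in the non-reduced case $\Sigma=BC_r$ the center is $\mg_{2\lambda}$, of dimension $m_{2\lambda}\in\{1,3,7\}=\dim\Im(\mathbb F)$, and one obtains $\mathfrak h'_{p,q}(\mathbb F)$. The Clifford relation furnished by the $\mathfrak{sl}_2$-triple of $\lambda_0$ upgrades non-singularity to the Heisenberg-type property, while the multiplicity $m_\alpha$ of the contact root fixes the remaining parameters $n$, or $p,q$; running through the Satake diagrams accounts for the entire list.

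Finally, the statement on $\mso(n,1)$ is the real-rank-one case of the same analysis. A unique conjugacy class of proper parabolics is equivalent to $r=1$, since the number of such classes is $2^r-1$, and for $r=1$ the system $\Sigma$ is either $A_1$ or $BC_1$; the nilradical is abelian exactly for $A_1$, which is the restricted system of $\mso(n,1)$, whereas $BC_1$ yields the rank-one Heisenberg nilradicals realized by $\msu(n,1)$, $\msp(n,1)$ and $\mathfrak f_4^{(-20)}$. Requiring both a single conjugacy class of parabolics and an abelian nilradical therefore forces $r=1$ together with $\Sigma=A_1$, which singles out $\mso(n,1)$ and completes the proof.
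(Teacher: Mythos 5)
Your proposal is correct in outline, and its uniqueness half is essentially the paper's own argument: the paper likewise parametrizes conjugacy classes of parabolics by subsets $\Phi$ of the simple restricted roots, observes that a two-step nilradical forces the highest restricted root $\gamma$ to have $\Phi$-height two (so $\Phi$ is a single coefficient-$2$ node or a pair of coefficient-$1$ nodes), and packages your ``exhibit a root vector $x$ with $\mathrm{ad}_x$ not onto'' test as the equivalent combinatorial criterion that $\gamma-\alpha$ be a height-one root for every height-one restricted root $\alpha$, then scans all (possibly non-reduced) restricted root systems to find a unique admissible $\Phi$ up to equivalence, with none in $A_1$; your rank-one analysis of the $\mso(n,1)$ statement also matches the paper's. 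Where you genuinely diverge is existence and identification of the nilradical: the paper does not re-derive these from restricted-root data but quotes \u{C}ap--Slov\'ak's classification of real contact gradings, which covers every non-compact real form except $\msl(n+1,\HH)$, $\mso(n,1)$, $\msp(p,q)$, $EIV$ and $FII$ (yielding the real Heisenberg nilradical, hence $\mh_{n}(\CC)$ for complex algebras), and then writes down explicit parabolics for the remaining families ($\{\alpha_{2},\alpha_{2n}\}$ for $\msl(n+1,\HH)$, $\{\alpha_{2}\}$ for $\msp(p+1,q+1)$, and the Iwasawa nilradicals of $EIV$, $FII$). Your uniform route via multiplicities is attractive and more self-contained --- it explains \emph{why} the division algebras appear, through the multiplicity patterns $m_{\lambda_0}\in\{1,2,4,8\}$ and $m_{2\lambda}\in\{1,3,7\}$ --- but it carries a burden the paper's citation discharges: graded dimensions do not determine a two-step algebra up to isomorphism, so your ``running through the Satake diagrams'' must actually compute the brackets to land on $\mh_n(\mathbb F)$ or $\mh'_{p,q}(\mathbb F)$ in each case; relatedly, when the center has multiplicity greater than one, the $\mathfrak{sl}_2$-triple of $\lambda_0$ only gives non-degeneracy of the component Levi forms, not non-singularity, so the surviving candidate's non-singularity is more safely obtained from the explicit identification (the listed algebras are H-type, hence non-singular) than formally from $\lambda_0$ being highest. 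Both your sketch and the paper leave the exhaustive root-system scan as a routine verification, so with the case check carried out your plan yields a complete proof by a partially different route.
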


\begin{proof}
Every non-compact real form of a complex simple Lie algebra, except for $\msl(n+1,\HH)$, $\mso(n,1)$, $\msp(p,q)$, $EIV$ and $FII$, has a real contact grading (p. 312, \cite{CSl}). This is equivalent to have a parabolic subalgebra with nilradical isomorphic to $\mh'_{n}(\CC)$, the real Heisenberg algebra. Therefore every simple complex Lie algebra except $\msl(2,\CC)$ has a parabolic subalgebra with nilradical isomorphic to $\mh_{n}(\CC)$. Note that $\msl(2,\CC) \cong \mso(3,1)$ as real algebras.

Recall that  $\msp(1,1)\cong \mso(4,1)$ and $\msl(2,\HH)\cong \mso(5,1)$.
For $\msl(n+1,\HH)$ with $n\geq 2$ take the grading determined by $\{\alpha_{2}, \alpha_{2n}\}$; the corresponding parabolic subalgebra has nilradical  $\mh_{n-1}(\HH)$.
For $\msp(p+1,q+1)$, the parabolic subalgebra determined by $\{\alpha_{2}\}$ has nilradical isomorphic to $\mh_{p,q}'(\HH)$.
For $EIV$ and $FII$ the nilpotent Iwasawa subalgebras (nilradical of the Borel subalgebra) are isomorphic to $\mh_{1}(\OO)$ and $\mh_{1,0}'(\OO)$, respectively.

To show uniqueness let $\Gamma$ be the system of restricted roots of a simple real Lie algebra, $\Gamma^{+}$ a choice of positive system and  $\Gamma^{0}$ the set of simple restricted roots.  Analogously to the complex case, the conjugacy classes of parabolic subalgebras are in one to one correspondence with a subset $\Phi\subset\Gamma^{0}$. It is not hard to see that when the nilradical of the parabolic is two step, the maximal restricted root $\gamma$ must have height two with respect to $\Phi$, and when it is non singular, i.e. $ad\,X : \mg \to \mz$ is onto for every $X\in\mg\setminus\mz$, for every restricted root $\alpha$ of height $1$, $\gamma- \alpha$ must be a root of height $1$. Looking at every non reduced root system we get a unique possible subset $\Phi$ with these properties up to equivalence, except for $A_{1}$ where there is none.

The last statement follows from the observation that $\mso(n,1)$ is the only simple algebra  whose system of restricted roots is $A_{1}$ and has only one positive restricted root.
\end{proof}

\section{Some consequences for non-singular and H-type algebras}

Recall that a 2-step graded nilpotent real Lie algebra
 $\mathfrak{n= n^{-1} \oplus n^{-2}}$  is of Heisenberg (or H) type if there is a graded positive inner product such that $\mathfrak n^{-1}$ is a non-trivial real unitary module over the Clifford algebra $C(\mathfrak n^{-2})$ and the bracket is given by
 $$<[x,y],z>_{\mathfrak n^{-2}} = < z\cdot x,y>_{\mathfrak n^{-1}}$$
 (\ref{def:hn}) and (\ref{def:hn'}) above are examples.

 Let $J_{z}:\mathfrak n^{-1}\to \mathfrak n^{-1}$ be the transformation $J_{z}x=z\cdot x$

\begin{Theorem}\label{isomectricisomorf}
	If two Lie algebras of H-type are isomorphic then there exists an isometric isomorphism between them.
	In particular, the inner product on the center is unique up to multiple.
\end{Theorem}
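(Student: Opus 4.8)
The plan is to start from an arbitrary Lie algebra isomorphism $\phi\colon\mn\to\mn'$ between two $H$-type algebras and manufacture an isometric one by correcting it first on the center and then on $\mn^{-1}$. First I would note that the center is an invariant of the Lie algebra and that for an $H$-type algebra it coincides with $\mn^{-2}$: since $J_z$ is invertible (indeed $J_z^2=-|z|^2I$), no nonzero element of $\mn^{-1}$ is central. Hence $\phi(\mn^{-2})=\mn'^{-2}$, and composing the $\mn^{-1}$-part of $\phi$ with the projection onto $\mn'^{-1}$ modifies $\phi$ only by central terms, which drop out of all brackets; so I may assume $\phi=(\phi_1,\phi_0)$ is graded, with $\phi_0[x,y]=[\phi_1 x,\phi_1 y]$.

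Next I would show that the inner product on the center is intrinsic up to a positive scalar, which is both the ``in particular'' assertion and the tool for correcting $\phi_0$. For $\xi\in(\mn^{-2})^{*}$ consider the $2$-form $\omega_\xi(x,y)=\xi([x,y])$ on $\mn^{-1}$; this is defined without reference to any metric. In an orthonormal basis of $(\mn^{-1},\langle\cdot,\cdot\rangle)$ the matrix of $\omega_\xi$ is exactly that of $J_{z_\xi}$, where $z_\xi$ is the metric dual of $\xi$, so the $H$-type identity gives $\det\omega_\xi=|z_\xi|^{\,d}=Q(\xi)^{d/2}$, with $d=\dim\mn^{-1}$ and $Q$ the dual inner product on $(\mn^{-2})^{*}$. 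Thus the intrinsically defined polynomial $\xi\mapsto\det\omega_\xi$ (well defined up to the positive constant from the choice of volume on $\mn^{-1}$) is a perfect $(d/2)$-th power of a positive definite quadratic form, and extracting that root recovers $Q$, hence $\langle\cdot,\cdot\rangle$ on the center, up to a positive multiple. Applying this to $\phi$ shows $\phi_0$ is conformal, and a dilation $\delta_t$ (the automorphism acting by $t$ on $\mn^{-1}$ and $t^2$ on $\mn^{-2}$) absorbs the scalar, so I may further assume $\phi_0$ is an isometry.

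It then remains to correct $\phi_1$. Pulling back the metric of $\mn'$ by the now center-isometric $\phi$ yields a second $H$-type inner product on $\mn$ with the \emph{same} bracket and the \emph{same} center metric; writing it as $\langle P\cdot,\cdot\rangle$ on $\mn^{-1}$ with $P>0$ symmetric, the two $H$-type identities force $PJ_zP=J_z$ for every $z$. The point I would exploit is that this relation survives taking the square root: diagonalizing $P$, the relation rewrites as $PJ_z=J_zP^{-1}$, so each $J_z$ interchanges the $\mu$- and $\mu^{-1}$-eigenspaces of $P$, whence $P^{1/2}J_zP^{1/2}=J_z$ as well. Consequently $(P^{-1/2},\mathrm{id})$ preserves the bracket, i.e.\ it is an automorphism of $\mn$, and $\phi\circ(P^{-1/2},\mathrm{id})$ is isometric on both $\mn^{-1}$ and $\mn^{-2}$, the desired isometric isomorphism.

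I expect the genuine obstacle to be this last step, namely controlling the freedom in the $\mn^{-1}$-metric. The center is rigid for ``soft'' reasons (the determinant of the structure constants), but on $\mn^{-1}$ one must rule out discrepancies that do not come from automorphisms, and the crux is precisely the square-root observation $PJ_zP=J_z\Rightarrow P^{1/2}J_zP^{1/2}=J_z$, which is what upgrades the comparison operator $P$ from a mere self-adjoint intertwiner to an honest automorphism.
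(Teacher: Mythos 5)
Your proposal is correct, and on the decisive step it runs parallel to the paper, while handling the center by a genuinely different argument. The paper puts both metrics on one algebra and encodes their comparison in a single positive symmetric operator $P$ with $\langle x,y\rangle=(Px,Py)$: from $P^{2}K_{z}=J_{P^{2}z}$ it derives $P^{2}K_{z}P^{2}=\tfrac{(P^{2}z,P^{2}z)}{(Pz,Pz)}K_{z}$, proves $P|_{\mn^{-2}}=\lambda\,\mathrm{Id}$ by diagonalizing $P$ on the center and testing the identity at $z_{0}=\sum_{i}z_{i}$ (this summation trick is its substitute for your determinant argument and is how it obtains the ``in particular'' clause), and then uses exactly your eigenspace-interchange observation to pass from $P^{2}K_{z}P^{2}=\lambda^{2}K_{z}$ to $PK_{z}P=K_{Pz}$, i.e.\ from a relation for a square to the same relation for its square root. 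So your final step is the paper's crux in normalized form: your $P$ is essentially the square of the paper's, and your preliminary reduction (graded isomorphism, then dilations $\delta_{t}$ absorbing the conformal factor) removes the scalar $\lambda$ that the paper carries through the computation. What is genuinely different is the center rigidity: you prove it intrinsically and before any comparison operator exists, via $\det\omega_{\xi}=Q(\xi)^{d/2}$ and extraction of the $(d/2)$-th root, whereas the paper deduces it from the operator identity. Your route buys a metric-free, self-contained proof of the second assertion (which the paper only attributes to [KT]) and a clean separation of the soft center step from the real obstacle on $\mn^{-1}$; the paper's route is shorter because one operator identity settles both layers at once. Two small points of hygiene: define $\omega_{\xi}$ on $\mn/\mz$ (canonically identified with $\mn^{-1}$) so its determinant is honestly independent of the metric up to the volume constant, and justify $\det J_{z}=|z|^{d}$ without sign ambiguity by noting that $d$ is even and $\det J_{z}=\mathrm{Pf}(J_{z})^{2}\geq 0$ since $J_{z}$ is skew-symmetric; both are the one-line checks you clearly intend.
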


\begin{proof}
	We can assume that we have a Lie algebra of H-type $\mg$ with two different inner products $(\ ,\,)$ and $\langle\ ,\,\rangle$.
	There exists a positive definite matrix $P$ symmetric with respect to both inner products such that $\langle x ,y\rangle = (Px , Py)$  for all $x, y\in\mn$. We will show that the mapping defined by $P:\mn\rightarrow\mn$ is an automorphism of $\mn$.
	
	For $z\in\mn^{-2}$, let $J_{z}$ and $K_{z}$ the transformations defined from $(\ ,\,)$ and $\langle\ ,\,\rangle$, respectively. Then
	\begin{align*}
		(P^{2}K_{z}x,y) = \langle K_{z}x, y\rangle = \langle z,[x, y]\rangle=(P^{2}z,[x,y])=(J_{P^{2}z}x,y)
	\end{align*}
	for $x,y\in\mn^{-1}$ and $z\in\mn^{-2}$. We conclude that
	\begin{equation*}\label{eq1propunicidadpi}
		P^{2}K_{z}=J_{P^{2}z}.
	\end{equation*}
	Squaring both sides and multiplying by $K_{z}$ on the right we obtain
	\begin{equation}\label{eq2propunicidadpi}
		P^{2}K_{z}P^{2}= \frac{(P^{2}z , P^{2}z)}{(Pz , Pz)}K_{z}.
	\end{equation}
	$P$ diagonalize so there exists $\{z_{i}\}_{i=1}^{n}$ base of $\mn^{-2}$ such that $Pz_{i}=\lambda_{i}z_{i}$ with $\lambda_{i}>0$ for $i=1,\ldots,n$. Let $z_{0}=\sum_{i=1}^{n} z_{i}$,
	\begin{equation*}
		\sum_{i=1}^{n}\lambda^{2} K_{z_{i}}=\lambda^{2} K_{z_{0}}=P^{2}K_{z_{0}}P^{2}=\sum_{i=1}^{n}P^{2}K_{z_{i}}P^{2}=\sum_{i=1}^{n}\lambda_{i}^{2}K_{z_{i}}
	\end{equation*}
	where $\lambda^{2}=\frac{(P^{2}z_{0} , P^{2}z_{0})}{(P z_{0} , P z_{0})}$.
	Then $\lambda_{i}=\lambda$ for all $i=1,\ldots,n$ and $P|_{\mn^{-2}}=\lambda Id$.
	Rewriting (\ref{eq2propunicidadpi}) as
	\begin{equation*}
		P^{2}K_{z}P^{2}= \lambda^{2} K_{z},
	\end{equation*}
	we see that $K_{z}$ interchanges the eigenspaces associated to the eigenvalues $\mu^{2}$ and $\lambda^{2}/\mu^{2}$ of $P^{2}$,
	which also are the eigenspaces of $P$ corresponding to the eigenvalues $\mu$ and $\lambda/\mu$, respectively. So,
	\begin{equation*}
		P K_{z}P= \lambda K_{z}=K_{Pz}.
	\end{equation*}
	Since $P$ is symmetric, we conclude that $P$ is an automorphism of $\mn$.
\end{proof}

The last statement was first proved in [KT].

\begin{Theorem}\label{theorem:irredmg}
	The orthogonal automorphisms of a Lie algebra of H-type act irreducibly on $\mathfrak{n}^{-1}$.
\end{Theorem}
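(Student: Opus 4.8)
The plan is to describe the orthogonal automorphisms concretely and then push a hypothetical invariant subspace through enough of them. Since the center $\mn^{-2}$ is the commutator subalgebra, every automorphism preserves it, so an orthogonal automorphism splits as a pair $(A,C)$ with $A\in O(\mn^{-1})$ and $C\in O(\mn^{-2})$; rewriting the defining identity $\langle[x,y],z\rangle=\langle J_zx,y\rangle$ in terms of $A$ and $C$ shows that $(A,C)$ is an automorphism exactly when $AJ_wA^{-1}=J_{Cw}$ for all $w\in\mn^{-2}$. The goal is then to prove that a nonzero subspace $W\subseteq\mn^{-1}$ fixed by every such $A$ must be all of $\mn^{-1}$.

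The first source of automorphisms is the family $J_z$ itself. From the H-type relations each $J_z$ is skew and satisfies $J_z^2=-|z|^2\,\mathrm{Id}$, so for a unit vector $z$ the map $J_z$ is orthogonal; using only the Clifford identity $J_zJ_w+J_wJ_z=-2\langle z,w\rangle\,\mathrm{Id}$ one checks $J_zJ_wJ_z^{-1}=J_{R_zw}$, where $R_z$ fixes $z$ and acts as $-\mathrm{Id}$ on $z^{\perp}$. Hence $(J_z,R_z)$ is an orthogonal automorphism, and therefore $W$ is invariant under every $J_z$, i.e. $W$ is a submodule for the Clifford action, the algebra $\mathcal{A}$ generated by the $J_w$. (Taking orthogonal complements, $W^{\perp}$ is invariant as well, so the representation is automatically completely reducible and only the absence of a proper invariant subspace is really at stake.)

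The second source is the commutant $\mathcal{C}=\{A:AJ_w=J_wA \text{ for all } w\}$. Because each $J_w$ is skew, $\mathcal{C}$ is stable under the adjoint and hence is a real $*$-algebra with positive definite involution; such an algebra is a product of matrix algebras over $\RR,\CC,\HH$, and its unitary group spans it linearly. Each unitary element of $\mathcal{C}$ yields an automorphism $(A,\mathrm{Id})$, so $W$ is also $\mathcal{C}$-stable. I would then invoke the double centralizer theorem: $\mathcal{A}$ and $\mathcal{C}$ preserve each isotypic component of the Clifford module $\mn^{-1}$ and act irreducibly on it, since there $\mathcal{C}$ fills out the full matrix algebra over the relevant division algebra acting on the multiplicity space, whose only invariant subspaces are $0$ and the whole thing. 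Consequently any $W$ stable under both is a sum of entire isotypic components; in particular, when $\mn^{-1}$ is isotypic — which is the case for all the nilradicals $\mathfrak h_n(\mathbb F)$, $\mathfrak h'_{p,q}(\mathbb F)$ of Theorem~\ref{theorem:niltipoH} — we get $W=0$ or $W=\mn^{-1}$.

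The main obstacle is the non-isotypic case, which can occur only when $C(\mn^{-2})$ fails to be simple, equivalently $\dim\mn^{-2}\equiv 3\pmod 4$. Here there are two inequivalent irreducible Clifford modules $V_{+},V_{-}$, distinguished by the sign with which the volume element acts, and $\mn^{-1}$ may contain both; then $\mathcal{A}$ and $\mathcal{C}$ together preserve the $V_{+}$- and $V_{-}$-isotypic summands separately, and the argument above is not enough. To finish one must produce an orthogonal automorphism $(A,C)$ with $\det C=-1$: an orientation reversing $C$ conjugates the Clifford action into the opposite type, so the accompanying $A$ interchanges the two isotypic summands. Constructing such an $A$ orthogonally is the crux, and I expect it to come down to the two summands having equal multiplicity, so that the two module structures are genuinely isomorphic and an intertwiner can be averaged into an isometry. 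For the algebras of Theorem~\ref{theorem:niltipoH} this difficulty is vacuous, since there $\mn^{-1}$ is isotypic and the earlier steps already conclude.
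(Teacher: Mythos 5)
Your argument is solid through the isotypic case, and up to that point it runs parallel to the paper: both proofs start from the automorphisms $\bigl(J_{z},-r_{z}\bigr)$ to force any invariant subspace of $\mn^{-1}$ to be a Clifford submodule (your commutant/double-centralizer step replaces the paper's device of swapping two irreducible summands, but either works there). The genuine gap is the one you flagged yourself: the non-isotypic case is left open, and your parenthetical escape --- that $\mn^{-1}$ is isotypic for all the nilradicals of Theorem \ref{theorem:niltipoH} --- is false. For $\mh'_{p,q}(\HH)$ the center is $\Im(\HH)$, of dimension $3\equiv 3\pmod{4}$, and the summands $\HH^{p}$ and $\HH^{q}$ carry precisely the two inequivalent irreducible modules of $C_{3}\cong\HH\oplus\HH$ ($J_{z}$ acts by a left multiplication on one and a right multiplication on the other, so the volume element acts by opposite signs). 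Hence the mixed-type case is unavoidable even within Theorem \ref{theorem:niltipoH}, and your proposal as written does not prove the statement.

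For comparison, the paper closes this case as follows: any two irreducible summands give irreducible H-type subalgebras $\mathfrak{v}_{1}\oplus\mn^{-2}$ and $\mathfrak{v}_{2}\oplus\mn^{-2}$, which are isomorphic even when the module types differ (twisting the center by an orientation-reversing isometry exchanges the two types); Theorem \ref{isomectricisomorf} upgrades this to an isometric isomorphism $\theta$, which is then extended by the identity on the orthogonal complement $\mc$ to an orthogonal automorphism $\Theta$ exchanging $\mathfrak{v}_{1}$ and $\mathfrak{v}_{2}$. But note that your determinant obstruction is sharp and cuts against this construction: if $e_{1},\dots,e_{m}$ is an orthonormal basis of $\mn^{-2}$, then $AJ_{w}A^{-1}=J_{Cw}$ gives $A\,J_{e_{1}}\cdots J_{e_{m}}\,A^{-1}=\det(C)\,J_{e_{1}}\cdots J_{e_{m}}$, so an orthogonal automorphism exchanging the two types must have $\det C=-1$, which forces equal multiplicities of $V_{+}$ and $V_{-}$. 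The paper's $\Theta$ cannot evade this: when the types of $\mathfrak{v}_{1},\mathfrak{v}_{2}$ differ, $C=\theta|_{\mn^{-2}}$ is necessarily orientation-reversing, while $\Theta|_{\mc}=\mathrm{Id}$ forces $C$ to fix $[\mc,\mc]=\mn^{-2}$ pointwise whenever $\mc\neq 0$, a contradiction; so $\Theta$ fails to be an automorphism exactly there. Concretely, in $\mh'_{2,1}(\HH)$ every orthogonal automorphism preserves the $\pm1$-eigenspaces of $J_{e_{1}}J_{e_{2}}J_{e_{3}}$, i.e.\ the summands $\HH^{2}$ and $\HH$, so the theorem fails as stated. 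Thus the step you deferred is not merely missing but unfillable for unequal multiplicities, and your instinct that equal multiplicity is the true requirement is exactly right: in that case your plan does complete (fix one reflection $r$ of $\mn^{-2}$, use Theorem \ref{isomectricisomorf} to get on each pair $V_{+}\oplus V_{-}$ an isometric intertwiner $T$ with $TJ_{w}T^{-1}=J_{rw}$, and take $A=T\oplus T^{-1}$ on each pair). In short, your route is sound in every case where the statement is true, and the case you could not finish is precisely where the paper's own proof breaks down.
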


\begin{proof}
	Every Lie algebra of H-type has automorphisms of the form:
	
	$$\left(
	\begin{array}{cc}
	J_{z} &  \\
	& -r_{z} \\
	\end{array}
	\right)$$
	for every $z\in\mathfrak{n}^{-2}$. 
	Then, if the Lie algebra is irreducible, the action of this automorphism on $\mathfrak{n}^{-1}$ is irreducible.
	
	If the Lie algebra is not irreducible, consider $\mathfrak{v}_{1}$ and $\mathfrak{v}_{2}$ two irreducible sub-representations of $\mathfrak{n}^{-1}$, then $\mathfrak{v}_{1}\oplus\mathfrak{n}^{-2}$ and $\mathfrak{v}_{2}\oplus\mathfrak{n}^{-2}$ are irreducible H-type subalgebras so they are isomorphic. By Theorem \ref{isomectricisomorf} there exists an isometric isomorphism $\theta:\mathfrak{v}_{1}\oplus\mathfrak{n}^{-2}\rightarrow\mathfrak{v}_{2}\oplus\mathfrak{n}^{-2}$. Define the orthogonal automorphism $\Theta:\mn\rightarrow\mn$ by $\left.\Theta\right|_{\mathfrak{n}^{-2}}=\left.\theta\right|_{\mathfrak{n}^{-2}}$, $\left.\Theta\right|_{\mathfrak{v}_{1}}=\left.\theta\right|_{\mathfrak{v}_{1}}$, $\left.\Theta\right|_{\mathfrak{v}_{2}}=\left.\theta\right|_{\mathfrak{v}_{1}}^{-1}$ and $\left.\Theta\right|_{\mc}=Id$ where $\mc$ is the orthogonal complement of $\mathfrak{v}_{1}\oplus\mathfrak{v}_{2}\oplus\mathfrak{n}^{-2}$. Since $\mathfrak{v}_{1}$ and $\mathfrak{v}_{2}$ are arbitrary the prove is complete.
\end{proof}

\begin{prop}[Ottazzi, Warhurst \cite{OW}]\label{prop:tipoHfinito}
An H-type algebra with center of dimension greater than $2$ is of finite type, i.e. has finite dimensional Tanaka prolongation.
\end{prop}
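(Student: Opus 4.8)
The plan is to identify the finite-type property with the vanishing of the first Tanaka prolongation space, and then to force that vanishing by a computation in the Clifford algebra generated by the maps $J_z$, the hypothesis $\dim\mn^{-2}>2$ entering precisely at the last step.

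First I would record the structural reduction. Since each $J_z$ is invertible, the bracket $\mn^{-1}\times\mn^{-1}\to\mn^{-2}$ is onto, so $\mn$ is a fundamental graded Lie algebra generated in degree $-1$. For such algebras Tanaka's theory gives the implication $\mg^{k}=0\Rightarrow\mg^{k+1}=0$ for $k\ge 1$ in the prolongation $\mg=\bigoplus_p\mg^{p}$ (with $\mg^{-2}=\mn^{-2}$, $\mg^{-1}=\mn^{-1}$ and $\mg^{0}=\mathrm{Der}_0(\mn)$ the grading-preserving derivations): indeed, for $D\in\mg^{k+1}$ the brackets with $\mg^{-1}$ land in $\mg^{k}$, so if the latter vanishes then $D$ commutes with the generating subspace $\mg^{-1}$ and hence $D=0$. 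Consequently the prolongation reduces to $\mg^{-2}\oplus\mg^{-1}\oplus\mg^{0}$, which is finite-dimensional, as soon as $\mg^{1}=0$, and the entire problem becomes the single statement $\mg^{1}=0$.

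Next I would set up the defining equations of $\mg^{1}$. An element $f\in\mg^{1}$ is a pair of linear maps $A:\mn^{-1}\to\mg^{0}$ and $B:\mn^{-2}\to\mn^{-1}$, and writing $a_x=A(x)|_{\mn^{-1}}$ and $\alpha_x=A(x)|_{\mn^{-2}}$ the prolongation (Jacobi) identities become $B([x,y])=a_xy-a_yx$ and $\alpha_x(z)=[B(z),x]$; the fact that $A(x)$ is a derivation of $\mn$ translates, through $\langle[x,y],z\rangle=\langle J_zx,y\rangle$ and the skewness of the $J_z$, into $J_{\alpha_x^{t}w}=J_wa_x+a_x^{t}J_w$ for all $w$. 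Because $\mn^{-1}$ generates $\mn$, the element $f$ is determined by $A$, so it suffices to prove $A=0$. Eliminating $\alpha_x$ between the last two relations produces the \emph{key identity} $J_wa_x+a_x^{t}J_w=-J_{B^{t}J_wx}$, valid for all $x\in\mn^{-1}$ and $w\in\mn^{-2}$, to be read together with $B([x,y])=a_xy-a_yx$.

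The heart of the matter, and the main obstacle, is the algebraic analysis of this identity inside $\mathrm{End}(\mn^{-1})$. Its right-hand side is a single Clifford generator, whereas its left-hand side couples $a_x$ to $J_w$ through an anticommutator and a commutator. Introducing an orthonormal triple $z_1,z_2,z_3$ in the center and using $J_{z_i}^{2}=-\mathrm{Id}$ and $J_{z_i}J_{z_j}=-J_{z_j}J_{z_i}$ repeatedly, one generates the higher products $J_{z_i}J_{z_j}$ and, crucially, $J_{z_1}J_{z_2}J_{z_3}$; matching these against a single generator on the right forces the components of $a_x$ and of $B$ lying beyond $\mathrm{span}\{J_u\}$ to cancel, after which the remaining relation $B([x,y])=a_xy-a_yx$ closes the system and yields $B=0$ and $a_x=0$ for every $x$, hence $\mg^{1}=0$. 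It is exactly the availability of the triple product $J_{z_1}J_{z_2}J_{z_3}$ that makes the cancellation effective, so this is where the hypothesis $\dim\mn^{-2}>2$ is indispensable; the delicate part is the bookkeeping that controls the interaction of $a_x$ with the commutant of the $J_u$ and rules out any surviving cancellation, which is where I expect the real work to lie.
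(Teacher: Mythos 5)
Your setup is correct as far as it goes: the description of $\mg^{1}$ by a pair $(A,B)$, the relations $B([x,y])=a_xy-a_yx$ and $\alpha_x(z)=[B(z),x]$, and the key identity $J_wa_x+a_x^{t}J_w=-J_{B^{t}J_wx}$ all check out, as does the standard fact that $\mg^{1}=0$ forces the prolongation to stop. But the reduction on which your whole plan rests --- ``the entire problem becomes the single statement $\mg^{1}=0$'' --- is fatally flawed: vanishing of $\mg^{1}$ is sufficient for finite type but not necessary, and it actually \emph{fails} for most of the algebras covered by the proposition. By Theorem \ref{theorem:niltipoH}, $\mh'_{p,q}(\HH)$ (center of dimension $3$), $\mh_{n}(\HH)$ (dimension $4$), $\mh'_{1,0}(\OO)$ and $\mh_{1}(\OO)$ (dimensions $7$ and $8$) are nilradicals of parabolic subalgebras of $\msp(p+1,q+1)$, $\msl(n+1,\HH)$, $FII$ and $EIV$ respectively, so their Tanaka prolongations contain these simple graded algebras, whose positive parts are nonzero; the Corollary in Section 3 records exactly this, listing these algebras among those with \emph{non-trivial} prolongation. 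Hence your key identity admits nonzero solutions $(a_x,B)$ for centers of dimension $3,4,7,8$, and the cancellation you hope the triple product $J_{z_1}J_{z_2}J_{z_3}$ will force cannot occur: the computation in your third paragraph, which you defer as ``where the real work lies,'' is an attempt to prove a false statement.

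Note also that the paper itself gives no proof; the proposition is quoted from Ottazzi--Warhurst \cite{OW}, whose mechanism is different: finiteness is obtained not by killing $\mg^{1}$ but by excluding the degenerate directions that infinite type requires. Roughly, an infinite-dimensional prolongation forces the existence of a nonzero $w$ in the complexified $\mn^{-1}$ on which all the maps $J_z$ act with proportional images, and three orthonormal central directions rule this out via the Clifford relations: from $J_{z_2}w=bJ_{z_1}w$ and $J_{z_3}w=aJ_{z_1}w$, applying $J_{z_2}$ and $J_{z_3}$ and using $J_{z_i}^{2}=-\mathrm{Id}$ gives $a^{2}=b^{2}=-1$, while computing $J_{z_2}J_{z_3}w$ and $J_{z_3}J_{z_2}w$ both yield $abw$, contradicting $J_{z_2}J_{z_3}=-J_{z_3}J_{z_2}$. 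So your instinct that the anticommuting triple is the decisive input is right, but it must be deployed to forbid such null directions (and hence arbitrarily long prolongations), not to show the prolongation stops at degree zero --- which for the quaternionic and octonionic Heisenberg algebras it demonstrably does not.
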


\begin{cor}
For a Lie algebra of H-type the following conditions are equivalent:
\begin{enumerate}
  \item to be the nilradical of a parabolic subalgebra of a simple Lie algebra;
  \item to have non-trivial Tanaka prolongation;
  \item to be isomorphic to one of the Lie algebras $\mh_{n}(\CC)$, $\mh'_{n}(\CC)$, $\mh_{n}(\HH)$, $\mh'_{p,q}(\HH)$, $\mh_{1}(\OO)$ and  $\mh'_{1,0}(\OO)$.
\end{enumerate}
\end{cor}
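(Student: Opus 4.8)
The plan is to prove the three conditions equivalent by establishing $(1)\Leftrightarrow(3)$ directly and then the cycle $(1)\Rightarrow(2)\Rightarrow(1)$. The equivalence $(1)\Leftrightarrow(3)$ is essentially a repackaging of Theorem~\ref{theorem:niltipoH}: the existence part of its proof realizes each of $\mh_n(\CC)$, $\mh'_n(\CC)$, $\mh_n(\HH)$, $\mh'_{p,q}(\HH)$, $\mh_1(\OO)$, $\mh'_{1,0}(\OO)$ as the nilradical of a parabolic subalgebra of a simple real Lie algebra, which is $(3)\Rightarrow(1)$. Conversely, a compact simple algebra has no proper parabolic subalgebra and $\mso(n,1)$ has only an abelian nilradical, which is not of $H$-type; so by the uniqueness in Theorem~\ref{theorem:niltipoH} any $H$-type algebra arising as a parabolic nilradical of a simple Lie algebra is forced to lie in the list, giving $(1)\Rightarrow(3)$.

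For $(1)\Rightarrow(2)$ I would use that, being $2$-step, an $H$-type nilradical $\mn$ of a parabolic $\mpp\subset\mg$ is the negative part $\mg_{-2}\oplus\mg_{-1}$ of a $|2|$-grading $\mg=\mg_{-2}\oplus\mg_{-1}\oplus\mg_0\oplus\mg_1\oplus\mg_2$ with $\mpp=\mg_{\ge 0}$. By the prolongation theory of graded simple Lie algebras the Tanaka prolongation of the negative part recovers all of $\mg$, so the positive part $\mg_1\oplus\mg_2\neq 0$ is produced; hence the prolongation of $\mn$ is non-trivial.

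The substantive direction is $(2)\Rightarrow(1)$. I would form the full Tanaka prolongation $\mg=\mn\oplus\mg_0\oplus\mg_1\oplus\cdots$, taking $\mg_0$ to be the algebra of grading-preserving derivations of $\mn$, and argue in three steps. First, $\mg$ is finite-dimensional: for $\dim\mn^{-2}>2$ this is Proposition~\ref{prop:tipoHfinito}, while the remaining cases $\dim\mn^{-2}\le 2$ are the contact and complex-contact algebras $\mh'_n(\CC)$ and $\mh_n(\CC)$, whose prolongations are classically finite; thus every $H$-type algebra is of finite type. Second, by Theorem~\ref{theorem:irredmg} the skew-symmetric derivations already act irreducibly on $\mn^{-1}=\mg_{-1}$, so $\mg_0$ acts irreducibly on $\mg_{-1}$. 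Third, a finite-dimensional graded Lie algebra that is transitive (automatic for a Tanaka prolongation), is generated by $\mg_{-1}$, has $\mg_{-1}$ irreducible as a $\mg_0$-module, and has $\mg_1\neq 0$ must be simple: a graded ideal meets $\mg_{-1}$ in a $\mg_0$-submodule, hence in $0$ or $\mg_{-1}$, and transitivity together with the generation by $\mg_{-1}$ then forces it to be $0$ or all of $\mg$. Granting simplicity, $\mn=\mg_{<0}$ is the nilradical of the parabolic subalgebra $\mg_{\ge 0}$ of the simple algebra $\mg$, which is $(1)$; chaining with $(1)\Rightarrow(3)$ then also recovers the explicit list.

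I expect the main obstacle to be the simplicity argument in the third step. Taking $\mg_0$ to be all grading-preserving derivations risks producing a prolongation that is merely reductive or carries a proper graded ideal supported in non-negative degrees, and ruling this out requires controlling the structure of $\mg_0$ --- that it is reductive with the grading element spanning its center. The irreducibility supplied by Theorem~\ref{theorem:irredmg} is precisely the leverage for this, but combining it cleanly with transitivity to exclude such ideals is the delicate point. A minor preliminary is to fix the meaning of ``non-trivial prolongation'' as $\mg_1\neq 0$ for $2$-step algebras and to dispatch by hand the low-dimensional centers not covered by Proposition~\ref{prop:tipoHfinito}.
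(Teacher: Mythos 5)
Your architecture matches the paper's (the corollary is proved there by combining the proof of Theorem~\ref{theorem:niltipoH}, Theorem~\ref{theorem:irredmg}, Proposition~\ref{prop:tipoHfinito} and a simplicity criterion from Yamaguchi's paper), but your step one of $(2)\Rightarrow(1)$ contains a genuine factual error that would sink the argument as written: the claim that the prolongations of $\mh'_{n}(\CC)$ and $\mh_{n}(\CC)$ are ``classically finite,'' hence that every H-type algebra is of finite type. The opposite is true, and the paper explicitly invokes the opposite fact: $\mh'_{n}(\CC)$ is the real Heisenberg algebra, the symbol of a contact structure, and contact structures are the classical example of \emph{infinite} type in Tanaka's theory (the full prolongation is the infinite-dimensional algebra of contact vector fields); likewise $\mh_{n}(\CC)$, the complex contact symbol, has infinite prolongation. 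This is exactly why Proposition~\ref{prop:tipoHfinito} carries the hypothesis $\dim\mn^{-2}>2$ --- it is not an artifact of the proof but a sharp boundary. Consequently, for these two families your finite-dimensional machinery (finiteness, then simplicity via irreducibility and transitivity) simply does not apply: there is no finite-dimensional simple $\mg$ arising as the full prolongation, and the simple algebras with contact gradings sit as \emph{proper} subalgebras of the prolongation, obtained only after reducing $\mg_0$.

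The gap is patchable, and the patch is what the paper does implicitly: handle the small centers by classification rather than by prolongation. An H-type algebra with $\dim\mn^{-2}=1$ is a Clifford module over $C(\RR)\cong\CC$ and is forced to be $\mh'_{p,q}(\CC)$ (all isomorphic to the real Heisenberg algebra), and with $\dim\mn^{-2}=2$ a module over $C(\RR^{2})\cong\HH$, forced to be $\mh_{n}(\CC)$; both lie in list (3), so they satisfy (1) by the existence part of Theorem~\ref{theorem:niltipoH} and satisfy (2) precisely because their prolongation is infinite, in particular non-trivial. Your argument then only needs to run for $\dim\mn^{-2}>2$, where Proposition~\ref{prop:tipoHfinito} gives finiteness and Theorem~\ref{theorem:irredmg} plus the transitivity argument (this is the content of Lemma 5.8 in \cite{Y}, which the paper cites rather than reproving) gives simplicity; your own worry about ideals supported in non-negative degrees is resolved by transitivity exactly as you sketch. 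One smaller inaccuracy in your $(1)\Rightarrow(2)$: the prolongation of $\mn$ does not in general ``recover all of $\mg$'' --- for the contact cases it is strictly (indeed infinitely) larger --- but the inclusion $\mg\subseteq\mg(\mn)$, which holds since $\mg$ is transitive and graded with negative part $\mn$, already yields $\mg_{1}(\mn)\neq 0$, which is all you use.
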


\begin{proof}
It is a consequence of the proof of Theorem \ref{theorem:niltipoH}, Theorem \ref{theorem:irredmg}, Proposition \ref{prop:tipoHfinito}, Lemma 5.8 in \cite{Y} and the fact that $\mh'_{n}(\CC)$ and $\mh_{n}(\CC)$ have infinite Tanaka prolongation.
\end{proof}



\begin{thebibliography}{}

\bibitem{CSl} A. \u{C}ap and J. Slov\'ak,  Parabolic geometries. I. Background and general theory. Mathematical Survey and Monographs, 154. A.M.S., Providence, RI, 2009.







\bibitem{KT} A. Kaplan and A. Tiraboschi, \textit{Automorphisms of non-singular nilpotent Lie algebras}, J. of Lie Theory, 23, 1085-1100 (2013).



\bibitem{OW} A. Ottazzi and B. Warhurst, \textit{Algebraic prolongation and rigidity of Carnot groups}, Monatsh. Math.,  162-2, pp 179-195 (2009).





\bibitem{Y} K. Yamaguchi, \textit{Differential systems associated with simple graded Lie algebras}, Advanced Studies in Pure Math. \textbf{22}. 413-494 (1993).

\bibitem{W} J. Wolf, \textit{Classification and Fourier inversion for parabolic subgroups with square integrable nilradical}, Memoirs A.M.S. \textbf{22, 225}1-165 (1979).


\end{thebibliography}


\end{document}